\documentclass[a4paper]{article}
\usepackage{amsthm,amsmath,amssymb}
\usepackage{enumerate,enumitem}
\usepackage{graphicx}
\usepackage{xfrac}
\textwidth=14cm

\usepackage{tabularx,booktabs,multirow}
\newcolumntype{C}{>{\centering\arraybackslash}X}
\newcolumntype{D}{>{\centering\arraybackslash}X}

\DeclareGraphicsExtensions{.pdf,.png,.eps}
\graphicspath{{figs/}}

\newtheorem{theorem}{Theorem}
\newtheorem{lemma}[theorem]{Lemma}

\newtheorem*{claim*}{Claim}

\theoremstyle{remark}

\newcommand{\B}{\ensuremath{\mathcal{B}}}

\newcommand{\cH}{\ensuremath{\mathcal{H}}}
\newcommand{\cF}{\ensuremath{\mathcal{F}}}

\newcommand{\sat}{{\rm  sat}}

\usepackage[usenames,dvipsnames]{xcolor}

\begin{document}

\title{A note on saturation for Berge-$G$ hypergraphs}

\author{Maria Axenovich\thanks{Karlsruhe Institute of Technology, Karlsruhe, Germany}\and Christian Winter\thanks{Karlsruhe Institute of Technology, Karlsruhe, Germany}\thanks{Research supported in part by
Talenx stipendium.}}

\maketitle

\begin{abstract}
For a graph $G=(V,E)$,  a hypergraph $H$ is called {\it Berge-$G$} if there is a hypergraph $H'$, isomorphic to $H$,  so that $V(G)\subseteq V(H')$  and there is a  bijection $\phi: E(G) \rightarrow E(H')$ such that for each $e\in E(G)$, $e \subseteq \phi(e)$.  The set of all Berge-$G$ hypergraphs is denoted $\B(G)$.

A hypergraph $H$ is called Berge-$G$ {\it saturated} if it does not contain any subhypergraph from $\B(G)$, but adding any new hyperedge of size at least $2$  to $H$ creates such a subhypergraph. 

Since each Berge-$G$ hypergraph contains $|E(G)|$ hypergedges, it follows that each Berge-$G$ saturated hypergraph must have at least $|E(G)|-1$ edges. 
We show that for each graph $G$ that is not a certain star and for any $n\geq |V(G)|$, there are Berge-$G$ saturated hypergraphs on $n$ vertices and exactly $|E(G)|-1$ hyperedges. 
This solves a problem of finding a saturated hypergraph with the  smallest number of edges 
 exactly.

\end{abstract}

\section{Introduction}

For a graph $G=(V,E)$,  a hypergraph $H$ is called {\it Berge-$G$} if there is a  hypergraph $H'$, isomorphic to $H$, so that $V(G)\subseteq V(H')$  and there is a  bijection $\phi: E(G) \rightarrow E(H')$ such that for each $e\in E(G)$, $e \subseteq \phi(e)$.  The set of all Berge-$G$ hypergraphs is denoted $\B(G)$.

 Here, for a graph or a hypergraph $F$, we shall always denote the vertex set of $F$ as $V(F)$ and the edge set of $F$ as $E(F)$. A copy of a graph $F$ in a graph $G$ is a subgraph of $G$ isomorphic to $F$. When clear from context, we shall drop the word ``copy" and just say that there is an $F$ in $G$.

Several classical questions regarding Berge-$G$ hypergraphs have been considered. Among those are extremal numbers for Berge-$G$ hypergraphs measuring the largest number of hyperedges or the largest weight of hypergraphs on $n$ vertices that contain no subhypergraph from $\B(G)$, see for example \cite{GP, G, GMT, PTTW}. 
In addition, Ramsey numbers for Berge-$G$ hypergraphs have been considered in  \cite{AG, GYS, GYLSS}.

In this paper, we consider a saturation problem. 
Let $\cF$ be a class of  hypergraphs with edges of size at least two. A hypergraph $\cH$ is called $\cF$ {\it saturated} if it does not contain any subhypergraph isomorphic to a member of $\cF$, but adding any new hyperedge of size at least $2$ to $\cH$ creates such a subhypergraph.  

Saturation problem for families of  $k$-uniform  hypergraphs has been treated by Pikhurko \cite{P}, see also \cite{P1}.  Pikhurko \cite{P} proved in particular, that for any $k$-uniform hypergraph $G$ there is an $n$-vertex $k$-uniform hypergraph $H$ that is $\{G\}$ saturated  and has $O(n^{k-1})$ edges. This extends a result of K\'aszonyi and Tuza \cite{KT} who proved this fact for $k=2$, i.e., for graphs. 
See also a survey of Faudree et al. \cite{FFS}.  Here $\{G\}$ saturated means that $H$ has no subhypergraph isomorphic to $G$ but adding any new hyperedge of size $k$ creates such a subhypergraph. This result is asymptotically tight for some $G$. The determination of a smallest size for $\{G\}$-saturated hypergraphs remains open in general.  In the same setting of $k$-uniform hypergraphs, English et al. \cite{EGMT} proved that there are $\B_k(G)$ saturated hypergraphs on $n$ vertices and $O(n)$ hyperedges, where $\B_k(G)$ is the set of all $k$-uniform Berge-$G$ hypergraphs, $3\leq k\leq 5$. See also English et al. \cite{EGKMS}, for Berge saturation results on some special graphs.  \\

We restrict our attention to the non-uniform case and Berge-$G$ hypergraphs. 
For $n\geq |V(G)|$,  let the {\it saturation number} for a Berge-$G$ hypergraph be defined as 
$$\sat(n, \B(G))= \min \{|E(\cH)|: ~ \cH \mbox{ is a }  \B(G)  \mbox{ saturated hypergraph on } n  \mbox{ vertices} \}.$$

Observe that for any nontrivial graph $G$, $$\sat(n, \B(G))\geq |E(G)|-1.$$

Since no Berge-$G$ hypergraph has hyperedges of sizes less than $2$, we can assume that all hypergraphs considered have hyperedges of sizes at least $2$.  We further assume that graphs considered have no isolated vertices. The following is the main result of this paper:

\begin{theorem}\label{thm:main}
 Let $G=(V, E)$ be a graph with no isolated vertices,  $n\geq |V(G)|$, and  $m=|E(G)|-1$. Then 
$$
\sat(n, \B(G))=
\begin{cases}
|E(G)|, & \mbox{ if } G \mbox{ is a star on at least four edges},\\
|E(G)| -1, & \mbox{ otherwise.}
\end{cases}$$
Moreover if $G_1$ is a star on at least $4$ edges  and $G_2 $ is  any other graph,  then   $\cH_t(n)$ 
 and  $\cH(n, m)$  are a Berge-$G_1$ and a Berge-$G_2$ saturated hypergraphs, respectively.
\end{theorem}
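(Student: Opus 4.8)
The plan is to establish matching lower and upper bounds in each of the two cases. The general lower bound $\sat(n,\B(G))\ge |E(G)|-1$ is already recorded, so what remains is (i) an improved lower bound $\sat(n,\B(G))\ge |E(G)|$ when $G$ is a star on at least four edges, and (ii) the two named constructions: $\cH(n,m)$ with $|E(G)|-1$ edges, which must be $\B(G)$-saturated for every non-star-like $G$, and $\cH_t(n)$ with $|E(G)|$ edges, which must be $\B(G)$-saturated for a large star. Writing $t=|E(G)|$, I would first isolate the combinatorial fact that drives the star case: a hypergraph contains a member of $\B(S_t)$ if and only if some vertex $c$ lies in $t$ distinct hyperedges $f_1,\dots,f_t$ whose punctured edges $f_1\setminus\{c\},\dots,f_t\setminus\{c\}$ admit a system of distinct representatives. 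In particular, every Berge-$S_t$ has all $t$ of its hyperedges passing through a single common center.

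For the star lower bound I would argue by contradiction. Suppose $G=S_t$ with $t\ge 4$ and that $H$ is $\B(G)$-saturated with exactly $t-1$ edges (a saturated hypergraph cannot have fewer, since even after one addition it would have fewer than $t$ edges and so contain no Berge-$S_t$). For any addable hyperedge $f$, the hypergraph $H+f$ has exactly $t$ edges and contains a Berge-$S_t$; as a Berge-$S_t$ uses $t$ hyperedges, it must use all of them, so all $t$ edges of $H+f$ share a common center $c_f$. Hence $c_f\in C:=\bigcap_{e\in E(H)}e$ and $c_f\in f$, which forces $C\neq\emptyset$. If $|V\setminus C|\ge 2$, then a $2$-element subset of $V\setminus C$ is a new hyperedge disjoint from $C$, so adding it produces no common center and hence no Berge-$S_t$, contradicting saturation. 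Thus $|C|\ge n-1$, so every edge of $H$ is a superset of a fixed set of size at least $n-1$; there are at most two such supersets ($C$ itself and $V$), contradicting $|E(H)|=t-1\ge 3$. This gives $\sat(n,\B(S_t))\ge t$. The same computation explains the threshold: for $t\le 3$ one only needs $t-1\le 2$ distinct supersets, no contradiction arises, and indeed the nested pair $\{V,\,V\setminus\{x\}\}$ is already $\B(S_3)$-saturated.

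For the upper bounds I would exhibit the two hypergraphs named in the statement. In each case Berge-$G$-freeness should be the easy half: $\cH(n,m)$ carries only $t-1<t$ edges and so cannot contain a Berge-$G$ at all, while $\cH_t(n)$ is to be arranged so that its $t$ edges admit no common vertex whose punctured edges have a system of distinct representatives, hence no Berge-$S_t$. The substance is the completion property -- adding any new hyperedge $f$ of size at least $2$ must create a member of $\B(G)$. I would verify this by exhibiting an explicit Berge embedding that uses $f$ for exactly one edge of $G$: fix an edge $e^*=ab$ of $G$, take the images of $a$ and $b$ to be two vertices of $f$, embed the remaining vertices of $G$ into the host, assign $e^*$ to $f$, and assign the other $t-1$ edges of $G$ to the remaining hyperedges of the construction.

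The main obstacle is precisely this last assignment: one must show that the $t-1$ edges of $G-e^*$ can be routed bijectively onto the $t-1$ fixed hyperedges so that each graph edge lies inside its image, simultaneously for every choice of $f$. This is where the structure of the construction and a Hall/system-of-distinct-representatives argument enter, together with a case analysis on how $f$ meets the distinguished vertices of the construction, and where the hypothesis that $G$ is not a large star is used: for a large star the required routing through fewer than $t$ hyperedges-through-a-common-center simply cannot exist, which is exactly why $\cH_t(n)$ must carry one extra edge. Once the completion property is confirmed in all cases, combining it with the general lower bound and the star lower bound of the second paragraph yields the stated value of $\sat(n,\B(G))$ and identifies $\cH_t(n)$ and $\cH(n,m)$ as the claimed saturated hypergraphs.
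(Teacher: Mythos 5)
Your lower bound for the star case is correct and genuinely different from the paper's. The paper's Lemma~\ref{lem:35} (where $S_t$ denotes the star on $t$ vertices) rules out a saturated hypergraph with $|E(S_t)|-1$ hyperedges by degree counting: every member of $\B(S_t)$ has a vertex of degree $t-1$, and a case analysis on whether $\cH$ has hyperedges of size $2$ produces a contradiction. You instead observe that a Berge copy of a $t$-edge star in $H+f$ must use \emph{all} $t$ hyperedges and that these must share the center, so $C=\bigcap_{e\in E(H)}e$ is nonempty; then a pair inside $V\setminus C$ would be an addable hyperedge creating no copy, forcing $|C|\ge n-1$ and hence at most two possible hyperedges, contradicting $t-1\ge 3$. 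This intersection argument is shorter than the paper's, avoids the size-$2$-edge casework entirely, and correctly locates the threshold at three edges.

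However, the upper bounds --- the bulk of the theorem --- are not proved; you name ``the main obstacle'' and stop exactly where the paper's work begins. Three concrete pieces are missing. First, the Hall-condition verification for $\cH(n,m)$: the paper takes a violating set $S\subseteq A$, notes that every element of $Q=B\setminus N_F(S)$ is a vertex cover of size at most $2$ of the graph $G_S$ (so $G_S$ is a star, a triangle, or an edge-disjoint union of two stars), and the ensuing case analysis leans on quantitative features of $H'(n,m)$ that you never engage: the empty set and the maximal number of singletons force $|N_F(S)|\ge n$, hence $|S|\ge n+1$, once $Q$ consists of pairs; and the pair set $E'$ is almost regular with $|E'|\le n-2$, which guarantees every vertex has a non-neighbor in $E'$ --- the step that dispatches the union-of-two-stars case. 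Second, the choice of the edge assigned to $f$ is not free: the paper relabels so that $ij\in E(G)$ with $i$ of \emph{maximum degree}, and uses this essentially when $|Q|=1$ and $S=A$, where $G-ij$ being a union of two stars plus maximality of $\deg(i)$ forces $G$ itself to be a star, the excluded case. Your unconstrained ``fix an edge $e^*=ab$'' leaves the routing genuinely unachievable for some choices (e.g.\ when deleting $e^*$ leaves a large star), so the bijection cannot be produced ``simultaneously for every choice of $f$'' without this condition. Third, you never actually verify saturation of $\cH_t(n)$ (the paper's two-case check that any added hyperedge completes a Berge star through the vertices of $[t-3]$ or through $n$), nor the small graphs $S_2,S_3,S_4,K_3$, which the paper handles by separate easy constructions. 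As written, the proposal establishes the lower bounds and the correct architecture, but the saturation of both constructions remains a plan rather than a proof.
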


\noindent
For a positive integer $n$, let $[n]=\{1, 2, \ldots, n\}$.  We shorten $\{i,j\}$ as $ij$ when clear from context. If $F$ is a hypergraph and $e$ is a hyperedge, we denote by $F+e$, $F-e$, a hypergraph obtained by adding $e$ to $F$, deleting $e$ from $F$, respectively.\\

\noindent
{\bf Construction of  a hypergraph  $\cH_t(n)$:}\\
Let $n$ and $t$ be positive integers, $t\leq n$.  Let $ \cH_t(n) = ([n],\{[n], [n]-\{1\}, [n]-\{2\},\ldots, [n]-\{t-3\}, [t-3]\}).$\\

\noindent
{\bf Construction of a set system $H'(n,m)$ and a hypergraph $\cH(n, m)$:}\\
Let $n$ and $m$ be positive integers, $m\leq \binom{n}{2}$.  Let $x= \min\{ m-1,  n\}$. 
Let $V' $ be a set of singletons, $V'\subseteq \{\{i\}\colon i\in[n]\}$, $|V'|=x$.
Let $E'$ be an edge-set of an almost regular graph (the degrees of vertices differ by at most one) on the vertex set $[n]$,  such that $|E'|=m - x-1$. 
Let $H' (n,m)= \{\varnothing\} \cup V' \cup E'$.\\

Informally, we build a set system $H'(n,m)$ of $m$ sets on the ground set $[n]$  by first picking an 
empty set, then as many as possible singletons, and then pairs, so that the pairs form an edge-set of an almost  regular graph.\\

Let $$\cH=\cH(n,m)= ([n], \{ [n]-E:  ~ E\in H'(n,m)\}).$$ 
Note that $|E(\cH)|=m$ and each hyperedge of $\cH$ has size $n$, $n-1$, or $n-2$.\\

\noindent
\textbf {Examples.} \\
If $n=4$ and $m=4$, we have: 
\begin{eqnarray*}
H'(4,4)& = &\{ \varnothing, \{1\}, \{2\}, \{3\}\},\\
E(\cH(4,4))& = &  \{ [4], \{2,3,4\}, \{1,3,4 \}, \{1,2,4\}\}.
\end{eqnarray*}

\noindent
If $n=5$ and $m = 8$, we have
$$H'(5,8)= \{ \varnothing,  \{1\}, \{2\}, \{3\}, \{4\}, \{5\}, \{12\}, \{34\}\},$$
$$E(\cH(5,8)) =  \{ [5], \{2,3,4,5\}, \{1,3,4,5 \}, \{1,2, 4,5\}, \{1,2,3,5\}, \{1,2, 3,4\}, \{3,4, 5\}, \{1,2,5\}\}.$$

Let $H$ be a Berge-$G$ hypergraph, we call  a copy $G'$ of $G$, where $V(G')\subseteq V(H)$  and the edges of $G'$ are contained in distinct hyperedges of $H$,  an {\it underlying  graph} of the Berge-$G$ hypergraph $H$. For example,  if $G'$ 
 is a triangle on vertices $1,2,3$, then a hypergraph $(\{1,2, 3, 4\}, \{\{1,2\}, \{2, 3, 4\}, \{1,2, 3, 4\}\})$ is Berge-$K_3$ and $G'$ 
 is an underlying graph of Berge-$K_3$ hypergraph $H$.

\section{Proof of the main theorem}

Let $S_t$ denote a star on $t$ vertices. 

\begin{lemma}
Let $t\ge 5$, $n\geq t$. Then $\sat(n,\B(S_t))=t-1= |E(S_t)|$.
\label{lem:35}
\end{lemma}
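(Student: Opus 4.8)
The plan is to prove the two bounds separately. For the lower bound $\sat(n,\B(S_t)) \geq t-1$, I would invoke the general observation already stated in the excerpt: every Berge-$G$ hypergraph contains exactly $|E(G)|$ hyperedges, so a $\B(S_t)$-saturated hypergraph must avoid all of them yet become non-avoiding after any single addition; hence it needs at least $|E(S_t)|-1 = t-2$ edges just to be one edge short of forming a Berge-$S_t$. The substance of this lemma, however, is that for stars on at least four edges the correct value is $|E(S_t)| = t-1$, one more than the generic floor. So the real work is a \emph{strengthened} lower bound showing $t-2$ edges never suffice, together with a matching construction on $t-1$ edges.

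For the strengthened lower bound, I would argue that no hypergraph $\cH$ with exactly $t-2$ hyperedges can be $\B(S_t)$-saturated. The key structural fact about Berge-$S_t$ is that a Berge-$S_t$ is just $t-1$ hyperedges that admit a \emph{system of distinct representatives} (SDR) together with a common ``center'' vertex: concretely, $\cH$ contains a Berge-$S_t$ iff there are $t-1$ hyperedges $f_1,\dots,f_{t-1}$ and a vertex $c$ lying in all of them, such that one can choose distinct representatives $v_i \in f_i \setminus \{c\}$ (the $v_i$ are the leaves, all distinct from each other and from $c$). With only $t-2$ edges total, $\cH$ can never contain a Berge-$S_t$ to begin with (too few edges), so the saturation condition reduces to: adding \emph{any} new hyperedge $e$ (of size $\geq 2$) must create a Berge-$S_t$. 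I would derive a contradiction by exhibiting a hyperedge whose addition fails to create one. The plan is to pick a candidate center vertex and count: after adding $e$ we have $t-1$ hyperedges, so a Berge-$S_t$ would have to use \emph{all} of them, forcing a common center $c$ in every hyperedge of $\cH + e$ and an SDR for the leaves. One then shows that a suitable choice of small new edge $e$ (for instance a pair disjoint from, or cleverly placed relative to, the shared structure) cannot simultaneously pass through the forced center and supply a fresh representative, because with edges this sparse there is too little room to meet both the ``common vertex'' and ``distinct leaves'' requirements. Making this case analysis clean — quantifying exactly when the Hall-type SDR condition can be forced — is where the argument needs care.

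For the upper bound, I would verify that the explicit construction $\cH_t(n)$ given in the excerpt is $\B(S_t)$-saturated and has exactly $t-1$ hyperedges. First I count: $\cH_t(n) = ([n], \{[n], [n]-\{1\}, \dots, [n]-\{t-3\}, [t-3]\})$ has the full set, then $t-2$ sets of the form $[n]-\{i\}$ for $i \in [t-3]$, wait — recounting, it has $[n]$, the $t-3$ sets $[n]-\{i\}$ for $i=1,\dots,t-3$, and the set $[t-3]$, for a total of $1 + (t-3) + 1 = t-1$ hyperedges, matching $|E(S_t)|$. Next I show it contains no Berge-$S_t$: a Berge-$S_t$ needs $t-1$ distinct hyperedges sharing a common center with an SDR of $t-1$ distinct leaves. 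Since $\cH_t(n)$ has exactly $t-1$ hyperedges, any Berge-$S_t$ must use all of them, so I check that these $t-1$ hyperedges have no common vertex, or that no SDR for the leaves exists — indeed the small set $[t-3]$ and the near-universal sets should obstruct a valid center-plus-SDR. Then I verify saturation: adding any new hyperedge $e$ creates the required common center and SDR. I expect the main obstacle to be the strengthened lower bound argument — precisely, ruling out \emph{every} possible $(t-2)$-edge hypergraph by a single structural/counting argument rather than an unwieldy case split, which likely hinges on showing that with $t-2$ sparse edges one can always find an addable pair that misses the forced SDR structure.
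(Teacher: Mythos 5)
Your overall strategy---verify that $\cH_t(n)$ has $t-1$ hyperedges and is saturated, and show that no hypergraph on $t-2$ hyperedges can be $\B(S_t)$-saturated by exhibiting an addable edge whose addition creates no Berge-$S_t$---is the paper's strategy in outline. But the decisive step, the strengthened lower bound, is left as an acknowledged to-do (``where the argument needs care''), and the route you sketch for it aims at the wrong obstruction. You propose to defeat the Hall/SDR condition: to place a new pair so that it cannot ``simultaneously pass through the forced center and supply a fresh representative.'' The SDR is a red herring here: with $n\geq t$ leaves are plentiful, and controlling systems of distinct representatives uniformly over \emph{all} $(t-2)$-edge hypergraphs would indeed be unwieldy. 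The idea that makes the argument work---present in your own characterization of Berge-$S_t$ but never exploited---is that the center alone suffices: any Berge-$S_t$ in $\cH+e$ must use all $t-1$ hyperedges, hence needs a vertex lying in every one of them, i.e.\ a vertex of degree $t-1$. So it is enough to find vertices $i,j$ with $\deg_{\cH}(i),\deg_{\cH}(j)\leq t-3$ and $ij\notin E(\cH)$: then every vertex of $\cH+ij$ has degree at most $t-2$ and no Berge-$S_t$ arises, contradicting saturation, with no SDR analysis whatsoever.

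What remains is to rule out the two configurations in which no such pair exists, and this is where the real content (and the hypothesis $t\geq 5$) lives; your sketch touches neither. (a) The pair might already be an edge of $\cH$: the paper shows that if $\cH$ has any edge of size $2$, say $12$, then every vertex of $\{3,\dots,n\}$ has degree at most $t-3$, so saturation forces all pairs among them to be edges, and iterating this forces all $\binom{n}{2}\geq\binom{t}{2}>t-2$ pairs into $\cH$, a contradiction; hence $\cH$ has no size-$2$ edges and any low-degree pair is genuinely addable. (b) There might not be two vertices of degree $\leq t-3$: then all but at most one vertex lie in every hyperedge, so every hyperedge contains $[n-1]$ (after relabeling), leaving at most the two distinct hyperedges $[n-1]$ and $[n]$, contradicting $t-2\geq 3$---this is exactly where $t\geq 5$ enters, consistent with the lemma failing for $t=4$ (the main theorem gives $\sat(n,\B(S_4))=2$). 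Your upper-bound verification is essentially the paper's and is fine: every vertex of $\cH_t(n)$ has degree exactly $t-2$ (a vertex $v\in[t-3]$ misses only $[n]-\{v\}$; a vertex outside $[t-3]$ misses only $[t-3]$), so no center for a star using all $t-1$ hyperedges exists; the saturation check is then a short two-case exhibition of an explicit underlying star, which you sketch but would still need to write out.
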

\begin{proof}

To show the lower bound, assume first that there is a hypergraph  $\cH$  on $t-2$ hyperedges and vertex set $[n]$ that is Berge-$S_t$ saturated. 
Since maximum degree of any member in $\B(S_t)$ is at least $t-1$, we have that the maximum degree of $\cH+e$ for any new edge $e$ of size at least $2$ is at least $t-1$.
We have that   $\cH$ has at least $|V(S_t)|=t\geq 5$ vertices.
Assume first that $\cH$ contains an edge of size $2$, say $12$. Then any vertex in $\{3, \ldots, n\}$ does not belong to this edge, so it has a degree at most $t-3$.
Thus, for any $i,j\in \{3, \ldots, n\}$, $i\neq j$, the maximum degree of $\cH+ij$ is at most $t-2$, implying that $ij \in E(\cH)$.
Since the edge $12$ was chosen arbitrarily, we can conclude that $\cH$ contains all edges of size $2$. Thus $\cH$ has at least $\binom{n}{2} \geq \binom{t}{2} > t-2$ edges, a contradiction. 
Therefore  $\cH$ has no hyperedges of size $2$.
Assume next that all but at most one vertex, say $n$, belong to all hyperedges of $\cH$. Thus each hyperedge contains the set $[n-1]$, implying that each hyperedge is either $[n-1]$ or $[n]$, a contradiction to the fact that there are  $t-2\geq 3$ distinct hyperedges in $E(\cH)$.
Hence, there are two vertices, say $1$ and $2$,  each with degree at most $t-3$. 
We know that $12 \not\in E(\cH)$ and that $\cH+12$ has maximum degree at most $t-2$, a contradiction. 
Thus $\cH$ is not $\B(S_t)$ saturated. \\

For the upper bound, we show that  $\cH_t$ is a $\B(S_t)$-saturated hypergraph. Recall that  $\cH= \cH_t= ([n],\{[n], [n]-\{1\}, [n]-\{2\},\ldots, [n]-\{t-3\}, [t-3]\}).$

Note that each vertex of $\cH$ has degree $t-2$. Thus $\cH$ is $\B(S_t)$-free.
Let $e\subseteq [n]$ of size at least $2$, such that $e\not\in E(\cH)$. Let $i,j \in e$, $i\neq j$. We shall show that  $\cH+e$ contains a Berge $S_t$ hypergraph.\\

Case 1.  $i, j\in [t-3]$, without loss of generality $i=1, j=2$.
Then the pairs $1n, 1(n-1), 13, \ldots, 1(t-2), 12$ are 
contained in $[n],  [n]-\{2\},\ldots, [n]-\{t-3\}, [t-3],  e$, respectively, and form an underlying graph of Berge-$S_t$ in $\cH+e$.\\

Case 2.  $i$ or $j$ is not in $[t-3]$. Let, without loss of generality $i=n$. Then, without loss of generality $j=n-1$ or $j=1$.
Then the pairs  $n2, n3, \ldots, n(t-2)$ are contained in 
$[n]-\{1\}, [n]-\{2\},\ldots, [n]-\{t-3\}$, respectively, and 
and the pairs $1n, (n-1)n$ are contained in $[n], e$ or $e, [n]$, respectively.
Thus all these $t-1$ pairs form an underlying graph  of Berge-$S_t$ in $\cH+e$.
\end{proof}

\vskip 1cm

\begin{proof}[Proof of Theorem \ref{thm:main}]
First we consider some special graphs: stars on at most three edges and a triangle. For the upper bounds on $\sat(n,\B(G))$ for $G=S_2, S_3, S_4, K_3$, consider 
the following hypergraphs in order for $n\geq 2$, $n\geq 3$, $n\geq 4$, and $n\geq 3$, respectively:
$([n], \emptyset), ([n], \{[n]\}), ([n], \{[n], [n]-\{1\}\}),  ([n], \{[n], [n]-\{1\}\})$. 
It is easy to see that these hypergraphs are saturated for the respective Berge hypergraphs.
Thus, for $G$ being one of these graphs, $\sat(\B(G))\leq |E(G)|-1$. Since the lower bound on $\sat(\B(G))$ is trivially $|E(G)|-1$, the theorem holds in this case. 
Lemma \ref{lem:35} implies that the theorem holds for all other stars.\\

From now on, let $G$ be a  non-empty graph  which is neither a star nor a $K_3$. Let $n$ be the number of vertices in $G$, $n\geq 4$.
We shall further assume that $G$ has no isolated vertices and that $V(G)=[n]$.
Let $m=|E(G)|-1$. We shall prove that $\cH=\cH(n,m)$ as defined in the introduction is a Berge-$G$ saturated hypergraph, i.e. such that it does not contain any member of $\B(G)$ as a subhypergraph and 
such that for any new hyperedge $e$ of size at least two,  $\cH+e$ contains a Berge-$G$ sub-hypergraph. In fact, instead of $\cH(n,m)$ we shall be mostly using the system $H'(n,m)$ also defined in the introduction.
Note that $\cH$ does not contain any member of $\B(G)$ since $\cH$ has $|E(G)|-1$ edges.
\\

Consider $e$,  $e\subseteq [n]$,  $|e|\geq 2$, $e\not\in E(H)$. Let $\{i,j\}\subseteq e$, $i\neq j$. Relabel the vertices of $G$ such that $ij \in E(G)$ and  $i$ is a vertex of maximum degree in $G$. We shall show that $\cH$ is a Berge-$(G-ij)$, thus showing that $\cH+e$ is Berge-$G$. 
We shall prove one of the following equivalent statements: \\

\noindent
(i)  there is a  bijection $\phi$ between $E(G-ij)$ and $E(\cH)$ such that $e' \subseteq \phi(e')$ for any $e'\in E(G-ij)$,\\
(ii)  there is a bijection $f$ between $E(G-ij)$ and $H'=H'(n,m)$ such that 
for each $e'\in E(G-ij)$, $e' \cap  f(e') = \varnothing$,\\
(iii) there is a perfect matching in a  bipartite graph $F$ with one part  $A= E(G) - \{ij\}$ and the other part $B=H'$ such that 
$e'\in  A=E(G) - \{ij\}$ and $e''\in B= H'$ are adjacent in $F$ iff $e'\cap e'' = \varnothing$. \\

One can see that  (i) and  (ii) are equivalent by defining  $\phi(e') $ to be 
$[n]-f(e')$.  The  equivalence of (ii) and (iii) is  clear   since $|A|=|B|$.
Next, we shall prove  (iii).\\

In each of the cases below, we assume that there is no perfect matching in $F$, thus by Hall's theorem, there is  a set $S\subseteq A$ such that $|N_F(S)|<|S|$.
Let $Q = B \setminus N_F(S)$. We see that each element of $Q$ intersects each edge in  $ S$. Let $G_S$ be a subgraph of $G$ with edge set $S$.
Since each element in $Q$ has size one or two, $G_S$ has a vertex cover of size one or two.  Thus $G_S$ is either a star, a triangle, or
an edge-disjoint union of two stars.
Clearly, $\emptyset$ is not in $Q$. Assume some singleton, say $\{1\}$ is in $Q$. Then $S$ forms a star with center $1$.
Then all singletons $\{2\}, \{3\}, ...$ and $\emptyset$ are in $N_F(S)$. 
If $S\neq A$, i.e., $|E(G)|-1>|S|$, then $|N_F(S)|\geq |S|$, a contradiction to our assumption on $S$.
If $S=A$, i.e., $G$ is a union of a star and an edge $ij$, since $i$ is a vertex of maximum degree in $G$, we see that $G$ is a star, a contradiction.
Thus we can assume that $Q$ contains only two-elements sets, i.e., in particular $H'$ has two-element sets and thus, by definition of $H'$,  $|H'| >n+1$.
Finally, since an empty set and all singletons are not in $Q$, they are in $N_F(S)$, so $|N_F(S)|\geq n$. 
Thus $|S|\geq n+1$, and in particular, $S$ does not form a star. We observed earlier that we could assume that $G$ is not a star.
\\\\

{\bf Case 1.} $G$  is a union of two stars.

We already excluded the case that $G$ is a star, so let's assume that $G$ is an edge-disjoint union of two stars with different centers.
If one of the stars has at most two edges, then $|E(G)| \leq n+1$, and $|S|\leq n$, a contradiction.
Thus  each of the stars has  at least $3$ edges.

Note that $G$ has at most $2n-1$ edges. In particular,  since there are $n$ singletons and an empty set in $H'$ and $|H'|\leq2n-1$,  we have that $E'$, the set of pairs from $H'$,  has size at most $n-2$  and thus the graph on edge set $E'$ has maximum degree at most $2$.  This implies that for every vertex there is a non-adjacent vertex in a graph with edge-set $E'$. Let $k$ be the integer such that $ik\not\in E'$. Relabel the vertices of $G$ such that $ij$ is an edge of $G$, and $i$ and $k$ are the centers of the stars whose union is $G$, and $j\neq k$.
Since $ik\not\in E'$, it follows that $ik \not\in Q$.  Since each pair from $Q$ forms a vertex cover of $G_S$, there is a pair different from $ik$ that forms a vertex cover of $G_S$. 
Since $ik$ is a vertex cover of $G$, it is a vertex cover of $G_S$. Thus $G_S$ has two distinct vertex covers of size $2$.
Then $G_S$ is a subgraph of a triangle with possibly  some further edges incident to the same vertex of the triangle or a subgraph of a $C_4$.
This implies that $|S|\leq n$, a contradiction.\\

\textbf {Case 2.}  $G$ is not a union of two stars.

If $|Q|=1$, then $|N_F(S)|=|B|-1= |A|-1$. Since $|S|> |N_F(S)|= |A|-1$ and $S\subseteq A$, we have that $S=A$, hence $G_S= G - ij$. 
Since there is a vertex cover of $G_S$ of size $2$, we have that $G_S= G-ij$  is a union of two stars $S', S''$, so $G$ is a union of two stars and an edge incident to a vertex of maximum degree of $G$. If maximum degree of $G$ is at least four, then $i$ is a center of $S'$ and $S''$.  Thus $G$ is a union of two stars, a contradiction.
If the maximum degree of $G$  is at most $3$, then $|E(G)| \leq  7$. 
On the other hand, $m=|H'| \ge n+2$.  Thus $n+3\le |E(G)|\le 7$. 
Thus $n=|V(G)| \leq 4$
and for each such choice of $n$ we reach a contradiction by the fact that $n+3\leq |E(G)|$.
If $Q$ contains two disjoint edges, say $12$ and $34$, then $G_S$ can only be a subgraph of a $4$-cycle $13241$. 
So, $|S|\leq 4\leq n$, a contradiction to our assumption that $|S|\geq n+1$. 
\\

Thus $Q$ contains edges that either form a star on at least three edges or a subgraph of a triangle.
If the edges of $Q$ form a star on at least three edges, say $12, 13, 14, \ldots$, $S$ forms a star with center $1$, a contradiction.
If the edges of $Q$ form a triangle, say $123$, then we arrive at a contradiction since no two-element set  can at the same time intersect $12$, $23$, and $13$.
Thus $Q$ contains exactly two adjacent edges, say $12$ and $13$. It follows that $S$ forms a star with center $1$ and maybe an edge $23$. Then $|S|\leq n$, a contradiction.
Hence, there is a perfect matching in $F$ and thus $\cH$ is Berge-$G$ saturated. 
\end{proof}

\section{Conclusions}
In this note, we completely determine $\sat(n, \B(G))$ for any $n\geq |V(G)|$ and show in particular that this function does not depend on $n$.
There are many variations of saturation numbers for non-uniform hypergraphs that could be considered. Among those are functions optimising the total weight of a saturated hypergraph, i.e., the sum of cardinalities of all hyperedges, or functions optimising the size of a saturated multihypergraph. These have been considered by the second author in \cite{W}.
One particularly interesting variation considered in \cite{W} is the following notion of saturation:  a hypergraph $\cH$ is called strongly $\cF$ saturated with respect to a family of hypergraphs $\cF$ if $\cH$ does not contain any member of $\cF$ as a subhypergraph, but replacing any hyperedge $e$ of $\cH$ with $e\cup \{v\}$ for any vertex $v\not\in e$ such that $e\cup \{v\} \notin E(\cH)$ 
creates such a member of $\cF$.\\

\noindent
{\bf Acknowledgements~~~}  We thank Casey Tompkins for useful discussions and carefully reading the manuscript.


\begin{thebibliography}{99}
 
 
 \bibitem{AG}  M. Axenovich,  A. Gy\'arf\'as, {\it A note on Ramsey numbers for Berge-G hypergraphs}, submitted.
 

\bibitem{EGMT} S. English, D. Gerbner, A. Methuku, M. Tait, {\it Linearity of Saturation for Berge
Hypergraphs,} (2018) arXiv:1807.06947.



\bibitem{EGKMS} S. English, N. Graber, P. Kirkpatrick, A. Methuku, E. C. Sullivan, {\it Saturation of Berge Hypergraphs}, (2017) arXiv:1710.03735.

\bibitem{FFS} J. Faudree, R. Faudree, and J. Schmitt, {\it A survey of minimum saturated graphs,} 
Electronic Journal of Combinatorics (2011) 1000:~ 19--29.



\bibitem{GP}  D. Gerbner, C. Palmer, {\it Extremal results for Berge-hypergraphs}, SIAM Journal on Discrete Mathematics,  (2015) 31(4): 2314--2327.


\bibitem{GMT} D. Gr\'osz,  A. Methuku,  C. Tompkins, {\it Uniformity thresholds for the asymptotic size of extremal Berge-$F$-free hypergraphs}, (2018) { arXiv:1803.01953v1}

\bibitem{GYLSS} A. Gy\'arf\'as, J. Lehel, G. N. S\'ark\"ozy, R. H. Schelp, {\it Monochromatic Hamiltonian Berge cycles in colored complete uniform hypergraphs}, { Journal of Combinatorial Theory B.}  (2008) 98:  342--358.

\bibitem{GYS} A. Gy\'arf\'as, G. N. S\'ark\"ozy, {\it The $3$-colour Ramsey number of a $3$-uniform Berge cycle}, {Combinatorics, Probability and Computing}  (2011) 20:  53--71.

\bibitem{G} E.  Gy\H{o}ri, {\it Triangle-free hypergraphs,}  { Combinatorics, Probability and Computing} {\bf 15} (2006), 185--191.



\bibitem{KT} L. K\'aszonyi and Z. Tuza, {\it Saturated graphs with minimal number of edges}, Journal of  Graph
Theory,  (1986) 10(2): ~203--210.



\bibitem{PTTW} C. Palmer,  M. Tait, C. Timmons,  A. Z. Wagner, {\it Tur\'an numbers for Berge-hypergraphs and related extremal problems,} (2017) { arXiv:1706.04249v1}


\bibitem{P} O. Pikhurko, {\it The minimum size of saturated hypergraphs,}    {Combinatorics, Probability and Computing}  (1999) 8(5): 483--492. .

\bibitem{P1} O. Pikhurko, {\it  Results and open problems on minimum saturated hypergraphs,}  Ars
Combinatoria (2004) 72: ~111--127.



\bibitem{W} C. Winter, {\it Berge saturation of non-uniform hypergraphs}, Bachelor Thesis, Karlsruhe Institute of Technology, 2018.

\end{thebibliography}
\end{document}